\newtheorem{conjecture}{Conjecture}
\newtheorem{theorem}{Theorem}
\newtheorem{remark}{Remark}
\begin{document}

\title[The Vol-Det conjecture for highly twisted links]{The Vol-Det conjecture for highly twisted alternating links}

\author[A.~Egorov]{Andrei~Egorov}
\address{Sobolev Institute of Mathematics, Siberian Branch of Russian Academy of Sciences, Novosibirsk, 630090, Russia  
\& 
Department of  Mechanics and Mathematics, 
Novosibirsk State University, Novosibirsk, 630090, Russia  
\& Regional Scientific and Educational Mathematical Center, Tomsk State University, Tomsk, 634050, Russia}
\email{\href{mailto:a.egorov2@g.nsu.ru}{a.egorov2@g.nsu.ru}}

\author[A.~Vesnin]{Andrei~Vesnin} 
\address{Sobolev Institute of Mathematics, Siberian Branch of Russian Academy of Sciences, Novosibirsk, 630090, Russia  
\&
Department of  Mechanics and Mathematics, 
Novosibirsk State University, Novosibirsk, 630090, Russia  
\& Regional Scientific and Educational Mathematical Center, Tomsk State University, Tomsk, 634050, Russia}
\email{\href{mailto:vesnin@math.nsc.ru}{vesnin@math.nsc.ru}}

\begin{abstract}
The Vol-Det Conjecture, formulated by Champanerkar, Kofman and Purcell, states that there exists a specific inequality connecting the hyperbolic volume of an alternating link and its determinant. Among the classes of links for which this conjecture holds are all alternating hyperbolic knots with at most 16 crossings, 2-bridge links, and links that are closures of 3-strand braids. In the present paper, Burton's bound on the number of crossings for which the Vol-Det Conjecture holds is improved for links with more than eight twists. In addition, Stoimenow's  inequalities between hyperbolic volumes and determinants are improved for alternating and alternating arborescent links with more than eight twists.
\end{abstract}

\thanks{This work was supported by the Ministry of Science and Higher Education of Russia (agreement no. 075-02-2024-1437).} 

\subjclass[2000]{57K14; 57K32; 57M15}	
\keywords{knot, link, determinant of link, hyperbolic volume of link complement, twisting number, arborescent link} 

\renewcommand{\baselinestretch}{1.2}

\date{\today}

\maketitle

\section{Introduction}

In recent decades, invariants constructed by using methods from both algebraic topology and hyperbolic geometry have been  actively studied and used in knot theory. The first family of these invariants includes, for example,  the Alexander polynomial, the Jones polynomial, the knot determinant and the knot signature (see~\cite{Lic97}). The second family includes geometric invariants associated with the complement of a knot when it is a hyperbolic manifold, such as the volume of the complement, the trace field of the fundamental group, and the group of isometries (see~\cite{BePe92, Th80}). Comparing the values of these different invariants leads to conjectures, including those made using artificial intelligence (see~\cite{AI2021}) about the relationships between invariants of different kinds. Apparently, the first example of such a conjecture was  Dunfield's hypothesis concerning the relation between the value of the Jones polynomial $V_K(t)$ for a given knot $K$ at $t=-1$ and the hyperbolic volume of its complement $S^3 \setminus K$ (see ~\cite{Dun00}). In the recent paper~\cite{DJLT24}, Davies, Juh\'{a}sz, Lackenby and Toma\v{s}ev established a connection between the knot signature and the cusp geometry for the case of hyperbolic knots. 
 
 In the present paper we discuss a conjecture that relates the volume  $\operatorname{vol} (K)$ of a hyperbolic 3-manifold, which is the complement of the link $K$, to its determinant $\det(K)$. This conjecture is known in low-dimensional topology as the Vol-Det Conjecture~\cite{CKP16, CKL19}. 

The determinant $\det(K)$ of the link $K$ is one of the best known invariants that can be obtained from the link  diagram. Namely, $\det(K)$ can be computed in one of the following ways:
$$
\det (K)  = | H_1 (\Sigma_2 (K), \mathbb Z) |  =  | \det (G(K)) |  =  | \det (M + M^{T}) | = | \Delta_K (-1)| = | V_K (-1) |,  
$$
where $\Sigma_2(K)$ is a double cover of the sphere $S^3$, branched over $K$, and $|H_1 (\Sigma_2(K),\mathbb Z) |$ is the order of its first homology group; $G(K)$ is the Goeritz matrix for $K$; $M$ is the Seifert matrix for $K$ and $M^T$ is the transposed matrix; $\Delta_K(t)$ is the Alexander polynomial of the link $K$; $V_k(t)$ is the Jones polynomial of the link $K$, see more in ~\cite{Lic97}. 
 
A link $K$ in $S^3$ is called \emph{hyperbolic} if its complement $S^3\setminus K$ is a three-dimensional hyperbolic manifold, see ~\cite{Th80}. The volume of the hyperbolic link $K$ is understood as the volume of its complement, $\operatorname{vol}(K) = \operatorname{vol} (S^3 \setminus K)$. Recall that a link $K$ in the three-dimensional sphere $S^3$ is \emph{alternating} if it has an alternating diagram, i.e. such a diagram that, as one moves along each component, overcrossings and undercrossings alternate. According to~\cite{Bur20}, among $352\,152\,252$ prime knots admitting diagrams with at most $19$ crossings there are $352\,151\,858$ hyperbolic knots and $51\,280\,967$ of them are alternating. 

The Vol-Det Conjecture was formulated by Champanerkar, Kofman and Purcell and it states the following.

\begin{conjecture} \cite{CKP16} \label{conj1}
Let $K$ be an alternating hyperbolic link. Then 
\begin{equation}
\operatorname{vol} (K) < 2 \pi \cdot \log \det (K).
\end{equation}
\end{conjecture}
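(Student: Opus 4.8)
\emph{Proof proposal.} The plan is to compare $\operatorname{vol}(K)$ and $2\pi\log\det(K)$ after normalizing both by the crossing number $c(K)$, working with a prime, reduced alternating diagram $D$ of $K$; by Menasco's theorem such a $K$ is hyperbolic exactly when $D$ is not the standard diagram of a $(2,n)$-torus link, so we may assume the twist number $t(D)\ge 2$. The guiding principle is that both the \emph{volume density} $\operatorname{vol}(K)/c(K)$ and the \emph{determinant density} $2\pi\log\det(K)/c(K)$ are bounded above by the volume $v_8$ of the regular ideal octahedron, and that both approach $v_8$ for the maximal (weaving) examples; Conjecture~\ref{conj1} asserts that the determinant density always strictly dominates the volume density. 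I would therefore try to prove the sharper local statement that each crossing contributes strictly more to $2\pi\log\det(K)$ than to $\operatorname{vol}(K)$.

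First I would install the volume side. Using the upper bound of Lackenby with the Agol--Thurston appendix, $\operatorname{vol}(K)\le 10\, v_3\,(t(D)-1)$, together with the octahedral decomposition underlying the Agol--Storm--Thurston estimate, one obtains the per-crossing ceiling $\operatorname{vol}(K)< v_8\,c(D)$, and, more importantly, a quantified \emph{volume defect} $v_8\,c(D)-\operatorname{vol}(K)$ that grows when the diagram contains short twist regions: a single-crossing twist region cannot carry a full octahedron of volume, so its presence pushes $\operatorname{vol}(K)$ well below the $v_8$-per-crossing ceiling. Making this defect explicit as a sum of local contributions, one per twist region, is the first technical task.

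Next I would install the determinant side through the spanning-tree model. For an alternating link $\det(K)=|\det G(K)|$ equals the number of spanning trees $\tau(G)$ of a checkerboard (Tait) graph $G$ of $D$, and a twist region with $a_i$ crossings enters $\tau(G)$ as a multi-edge or a series path that scales the tree count in a continuant (Fibonacci-type) fashion. Summing the corresponding logarithmic contributions yields a lower bound $\log\det(K)\ge\sum_i \log\big(\text{contribution of }a_i\big)$, and Stoimenow's determinant inequalities can be used to control the interaction between twist regions. The aim is a \emph{determinant defect} $v_8\,c(D)-2\pi\log\det(K)$ expressed, like the volume defect, as a sum of local terms, one per twist region.

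The heart of the argument, and the main obstacle, is to show term by term that the determinant defect is strictly smaller than the volume defect, uniformly over all diagrams. This is delicate precisely in the regime of many \emph{short} twist regions, in particular single-crossing clasps: there the octahedral volume model is farthest from sharp while the spanning-tree count grows most slowly, so both defects are simultaneously large and their comparison is tightest. Controlling this regime is exactly where the present paper succeeds only under the hypothesis of more than eight crossings per twist, and a uniform local comparison valid for arbitrarily short twist regions is the essential gap that must be closed to settle Conjecture~\ref{conj1} in full. I expect that closing it requires a genuinely two-sided local estimate, lowering the volume bound and raising the determinant bound in a coordinated way, rather than the one-sided bounds that suffice in the highly twisted case.
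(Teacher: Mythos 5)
You were asked about Conjecture~\ref{conj1} itself, and the first thing to say is that the paper does not prove this statement --- it is an open conjecture, which the paper establishes only for restricted families (Theorem~\ref{thm1} under $t(D)>8$ together with the crossing inequality (\ref{eqn:main}), and the weaker volume--determinant inequalities of Theorems~\ref{thm2} and \ref{thm3}). So there is no proof in the paper to compare against, and your text, to its credit, does not actually claim one: it is a program, and the step you yourself flag --- the uniform, term-by-term comparison of a ``determinant defect'' with a ``volume defect'' over twist regions, valid down to single-crossing twist regions --- is the whole conjecture in disguise, not a technical lemma awaiting routine verification. Beyond that, two of the ingredients you treat as installed are not available. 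First, Lackenby's bound $\operatorname{vol}(K)\le 10\, v_{\text{tet}}\,(t(D)-1)$ is global in the twist number and does not localize into per-twist-region contributions measured against a $v_8$-per-crossing ceiling (here $v_8$ is the volume of the regular ideal octahedron); the bound $\operatorname{vol}(K)< v_8\, c(D)$ comes from an entirely different argument, and no known decomposition writes $v_8\, c(D)-\operatorname{vol}(K)$ as a sum of local terms with the quantitative behavior your plan needs. Second, on the determinant side, the ceiling $2\pi\log\det(K)\le v_8\, c(K)$ that your ``determinant defect'' presupposes is itself only conjectural --- as observed in \cite{CKP16} it is tied to Kenyon's open conjecture on the spanning-tree entropy of planar graphs --- so you cannot anchor both defects at the common value $v_8\, c(D)$ and reduce the problem to comparing them.

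You also misstate the regime the paper handles: the hypothesis is $t(D)>8$ \emph{twist regions} in the diagram (plus, in Theorem~\ref{thm1}, a lower bound on $c(D)$ in terms of $t(D)$), not ``more than eight crossings per twist.'' The paper's actual mechanism is one-sided and global: the improved volume bound $\operatorname{vol}(K)\le 10\, v_{\text{tet}}\,(t(D)-1.4)$ from \cite{VeEg24} is played against the determinant lower bounds of Ito (\ref{eqn:Ito}) and Stoimenow (\ref{eqn:Sto}), with no local defect analysis anywhere. In summary: your density-comparison framing is a reasonable heuristic, consistent with the asymptotic picture for weaving knots in \cite{CKP16}, but the central comparison step is missing and is equivalent to the conjecture itself, so the proposal is not a proof and cannot be repaired within the methods it invokes.
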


In~\cite{CKP16}, it was shown that the conjecture~\ref{conj1} is true for all hyperbolic alternating knots admitting diagrams with no more than $16$ crossings, as well as for several infinite families of knots. In addition, it was demonstrated that the constant $2\pi$  in the conjecture~\ref{conj1} is accurate, that is for any $\alpha <  2\pi$ there exists an alternating hyperbolic knot $K_{\alpha}$ such that $\operatorname{vol} (K_{\alpha}) > \alpha  \cdot \log \det (K_{\alpha})$. In~\cite{CKL19}, based on the calculation of the Mahler measure for some polynomials in two variables, it was shown that the conjecture~\ref{conj1} is true for several infinite families of alternating links.  

Burton proved that the conjecture~\ref{conj1} is true for hyperbolic $2$-bridge links, see~\cite[Th.~3.5]{Bur18}, and for hyperbolic alternating links which are closures of $3$-strand braids, see~\cite[Th.~4.1]{Bur18}.

\smallskip

Next, we will consider estimates of volumes and determinants using the concept of twist in the diagram. 

Let $K$ be a link in $S^3$, and $D$ be a diagram of $K$. The \emph{twist} in the diagram $D$ is a connected set of bigons in $D$ arranged in a line, which is maximal in the sense that it is not a part of a longer connected set of bigons, or it is a single crossing incident to regions of the diagram that are not bigons. The \emph{number of twists} in the diagram $D$ is denoted by $t(D)$. For example, consider a knot which is the closure of a $4$-strand braid $\beta = \sigma_1^2 \sigma_2^{-3} \sigma_3^2 \sigma_1 \sigma_2^{-2} \sigma_3$ shown in Figure~\ref{fig1}. This knot is notated by $11_{120}$ in~\cite{Snap} and by $11a47$ in~\cite{LiMo}. Its diagram has six twists which are marked by red dotted closed curves in the figure. Since the volume and the determinant of the knot $10_{120}$ are tabulated in~\cite{Snap} and~\cite{LiMo}, namely, $\operatorname{vol}(10_{120}) = 15.597714$ and $\det(10_{120}) = 117$, it is easy to make sure that the conjecture~\ref{conj1} is true for it.

\begin{figure}[h]
	\begin{center}
		\scalebox{0.8}{
			\begin{tikzpicture}
			\pic[
			rotate=90,
			braid/.cd,
			every strand/.style={ultra thick},
			strand 1/.style={black},
			strand 2/.style={black},
			strand 3/.style={black},
			strand 4/.style={black},]
			at (0,0) {braid={s_3^{-1} s_3^{-1} s_2^1 s_2^1 s_2^1 s_1^{-1} s_1^{-1}  s_3^{-1} s_2^1 s_2^1 s_1^{-1} }};
			\draw [ultra thick, black] (0,4) -- (11.5,4);
			\draw [ultra thick, black] (0,3.5) -- (11.5,3.5);
			\draw [ultra thick, black] (0,-.5) -- (11.5,-.5);
			\draw [ultra thick, black] (0,-1) -- (11.5,-1);	
			\draw[ultra thick, black] (0,4) arc (90:270:1);
			\draw[ultra thick, black] (11.5,4) arc (90:-90:1);	
			\draw[ultra thick, black] (0,3.5) arc (90:270:0.25);
			\draw[ultra thick, black] (11.5,3.5) arc (90:-90:0.25);			
			\draw[ultra thick, black] (0,0) arc (90:270:0.25);
			\draw[ultra thick, black] (11.5,0) arc (90:-90:0.25);
			\draw[ultra thick, black] (0,1) arc (90:270:1);
			\draw[ultra thick, black] (11.5,1) arc (90:-90:1);
			\draw[ultra thick, dotted, red] (1.25,2.5) ellipse (1cm and 0.65cm);
			\draw[ultra thick, dotted, red] (7.75,2.5) ellipse (0.7cm and 0.65cm);
			\draw[ultra thick, dotted, red] (3.75,1.5) ellipse (1.4cm and 0.65cm);
			\draw[ultra thick, dotted, red] (9.25,1.5) ellipse (1cm and 0.65cm);
			\draw[ultra thick, dotted, red] (6.25,0.5) ellipse (1cm and 0.65cm);
			\draw[ultra thick, dotted, red] (10.75,0.5) ellipse (0.7cm and 0.65cm);
			\end{tikzpicture}
		}
	\end{center}
	\caption{An alternating diagram of the knot $11_{120}$ with six twists.} \label{fig1}
\end{figure}
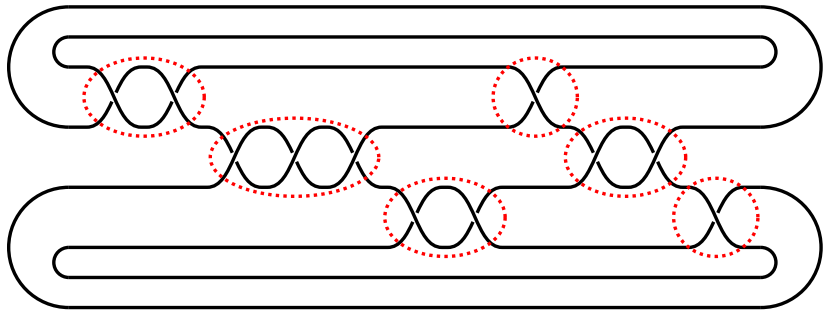

Throughout the paper we will suppose that the diagrams we consider are prime and twist-reduced. The diagram is \emph{prime} if any simple closed curve which meets two edges of the diagram transversely must bound a region of the diagram with no crossings. The diagram is \emph{twist-reduced} if any simple closed curve intersect the diagram transversely in four edges, with two points of intersection adjacent to one crossing and the other two adjacent to another crossing, then that curve must bound a region of the diagram consisting of a (possibly empty) collection of bigons arranged end to end between the crossings, see~\cite{Lac04, Pur07, Sto07} for details. 
If the diagram of a hyperbolic link is not prime, then some crossings are redundant and can be removed to obtain a prime diagram.
Lackenby proved~\cite{Lac04} that if the connected prime alternating diagram is not twist-reduced, then a series of flypes will decrease the number of twist regions of the diagram, until we are left with a twist-reduced diagram.

In this paper we consider a class of alternating links admitting reduced alternating diagrams with more than eight twists. For this class of links the following results are obtained:  
\begin{itemize}
\item[(1)] an improvement of the Burton's bound~\cite[Th.~5.1]{Bur18} on the number of crossings of links for which the Vol-Det Conjecture holds (see theorem~\ref{thm1}); 
\item[(2)] an improvement of the Stoimenow's inequality~\cite[Th.~1.1]{Sto07} between hyperbolic volume and determinant for alternating links (see theorem~\ref{thm2});
\item[(3)] an improvement of the Stoimenow's inequality~\cite[Prop.~6.3]{Sto07} between hyperbolic volume and determinant for links admitting reduced alternating arborescent diagrams (see theorem~\ref{thm3}).  
\end{itemize}
The results are based on using the bound for the volume of  the link complement in terms of the number of twists from~\cite{VeEg24} instead of~\cite{Lac04}.  Moreover, in theorem~\ref{thm1} we use the bound for the link determinant obtained by Ito in~\cite{Ito23}.

\section{The Vol-Det Conjecture for highly twisted links}

We begin by recalling three results which show that the determinant of a link can be bounded from below by the  function depending only on the number of twists in its diagram, as well as by the functions depending only on number of twists and the number of crossings. Here and further we will denote by $\gamma$ a number such that $1/\gamma$ is the real positive root of the equation $x^3 (x+1)^2 = 1$. It is easy to see that such a root is unique and equal to $0.701607$ up to six decimal places. So we have
\begin{equation}
\gamma = 1.425299. \label{gamma}
\end{equation}

In~\cite[Th.~4.3]{Sto07} Stoimenow showed that if $t(D)$ is the number of twists in the given alternating twist-reduced diagram $D$ of a link $K$, then there is an inequality  
\begin{equation}
	\det (K) \geq 2 \gamma^{t(D)-1}.  \label{eqn:Sto}
\end{equation} 

In~\cite[Th.~5.1]{Bur18} Burton improved inequality~(\ref{eqn:Ito}) by considering not only the number of twists, but also the number of crossings. Namely, let $K$ be an alternating hyperbolic link with the reduced alternating twist-reduced diagram $D$ having $t(D)$ twists and $c(D)$ crossings.  
Then
\begin{equation}
	\det (K) \geq 2 \gamma^{t(D)-1} + c(D) - t(D). \label{eqn:Burton}
\end{equation}   

In~\cite[Th.~4.3]{Ito23} Ito improved inequality~(\ref{eqn:Sto}) in the following way. Let $D$ be a reduced alternating twist-reduced diagram of a link $K$ with $t(D) \geq 2$. Then 
\begin{equation}
\operatorname{det} (K) > 2 \gamma^{-1} \left( \gamma^{t(D)} + (c(D) - t(D)) \, \gamma^{(t(D) - 1)/2} \right). \label{eqn:Ito}
\end{equation}  

To compute the volumes of polyhedra in the three-dimensional hyperbolic space $\mathbb H^3$ and the volumes of three-dimensional hyperbolic manifolds, the Lobachevsky function $\Lambda(\theta)$ is usually used. This function was introduced by Milnor in~\cite{Mil82} and is given by the formula   
\begin{equation*}
	\Lambda (\theta) = - \int_0^{\theta} \ln | 2 \sin (t) | dt.
\end{equation*}

In the following we will use the value $v_{\text{tet}}  = 3\Lambda (\pi / 3)$, which is equal to the volume of a regular (all dihedral angles are equal) hyperbolic ideal (all vertices are in $\partial\mathbb H^3$) tetrahedron. Such a tetrahedron has all dihedral angles equal to $\pi/3$ and it has the largest volume among all hyperbolic tetrahedra with finite volume. Up to six digits after the decimal point, we have
\begin{equation}
	v_{\text{tet}} = 1.014941. \label{volume}
\end{equation}

In~\cite{Lac04} Lackenby, Agol and Thurston established an inequality that allows us to bound the volume of a link from above in terms of the number of twists. Namely, suppose the diagram $D$ of a link $K$ has $t(D)$ twists, then  
\begin{equation}
	\operatorname{vol} (K) \leq 10 \, v_{\text{tet}} \, (t(D) - 1). \label{eqn:Lackenby}
\end{equation}
They also showed that there is a sequence of links $K_i$ such that $\operatorname{vol}(K_i) /t(D_i)\to 10 v_{\text{tet}}$. The bound~(\ref{eqn:Lackenby}) is related to the decomposition of the complements of the augmented links into ideal right-angled hyperbolic polyhedra. For more information on this class of polyhedra see~\cite{Ve17, VeEg2020}. 

Using the inequalities~(\ref{eqn:Burton}) and~(\ref{eqn:Lackenby}) Burton obtained the following result~\cite[Th.~5.1]{Bur18}.  Let $K$ be an alternating hyperbolic link, and its reduced alternating diagram $D$ has $t(D)$ twists and $c(D)$ crossings. Let us denote 
\begin{equation}
\xi = \exp \left( \frac{5 v_{\text{tet}}}{\pi} \right). \label{eqn:xi}
\end{equation}   
If the inequality 
\begin{equation}
	c(D) \geq t(D) + \xi^{t(D) - 1} - 2 \gamma^{t(D) - 1}  \label{eqn6}
\end{equation}
is satisfied, then the conjecture~\ref{conj1} is true for $K$. 

The following theorem shows that for links with diagrams containing more than eight twists, the bound~(\ref{eqn6}) can be improved by using a stronger estimates from~\cite{VeEg24} and~\cite{Ito23} instead of~(\ref{eqn:Lackenby}) and~(\ref{eqn:Burton}).

\begin{theorem} \label{thm1}
	Let $K$ be an alternating hyperbolic link, and let its reduced alterna\-ting twist-reduced diagram $D$ have $t(D) > 8$ twists and $c(D)$ crossings. If    
\begin{equation}
c(D) \geq t(D) + \frac{\xi^{t(D) - 1.4}}{2 \gamma^{\frac{t(D)-3}{2}}} - \gamma^{\frac{t(D) + 1}{2}}, \label{eqn:main}
\end{equation}
then the Vol-Det Conjecture  is true for $K$. 
\end{theorem}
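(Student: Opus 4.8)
The plan is to run Burton's scheme for deriving the Vol-Det Conjecture from a volume upper bound together with a determinant lower bound, but to feed it the two sharper inputs advertised in the introduction: the improved volume estimate of \cite{VeEg24} in place of Lackenby's~(\ref{eqn:Lackenby}), and Ito's determinant bound~(\ref{eqn:Ito}) in place of Burton's. The guiding observation is that, by the definition~(\ref{eqn:xi}) of $\xi$, for any real $s$ one has $2\pi \log \xi^{s} = 10\, v_{\text{tet}}\, s$; hence if $\operatorname{vol}(K)$ is bounded above by $10\, v_{\text{tet}}\, s$ while $\det(K)$ is bounded below by $\xi^{s}$, Conjecture~\ref{conj1} follows immediately. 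Everything therefore reduces to pinning down the correct value of $s$ and checking the two bounds against one another.

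First I would invoke the improved volume bound of \cite{VeEg24}, which for a reduced twist-reduced diagram with $t(D) > 8$ twists sharpens~(\ref{eqn:Lackenby}) to $\operatorname{vol}(K) \le 10\, v_{\text{tet}}\,(t(D) - 1.4)$; this is precisely where the hypothesis $t(D) > 8$ enters, since the refinement is only available, and only an actual improvement over~(\ref{eqn:Lackenby}), once there are sufficiently many twists. Taking $s = t(D) - 1.4$ and using $2\pi \log \xi^{t(D)-1.4} = 10\, v_{\text{tet}}\,(t(D)-1.4)$, it then suffices to establish the determinant lower bound $\det(K) > \xi^{t(D) - 1.4}$: combined with the volume estimate this yields $\operatorname{vol}(K) \le 2\pi \log \xi^{t(D)-1.4} < 2\pi \log \det(K)$, which is exactly the inequality asserted by the conjecture.

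It remains to prove $\det(K) > \xi^{t(D)-1.4}$ under hypothesis~(\ref{eqn:main}). Here I would apply Ito's inequality~(\ref{eqn:Ito}), namely $\det(K) > 2\gamma^{-1}\bigl(\gamma^{t(D)} + (c(D)-t(D))\,\gamma^{(t(D)-1)/2}\bigr)$, and show that its right-hand side is at least $\xi^{t(D)-1.4}$. Treating this as a linear inequality in the quantity $c(D)-t(D)$ and solving, one isolates $c(D) - t(D) \ge \tfrac{\gamma}{2}\,\xi^{t(D)-1.4}\,\gamma^{-(t(D)-1)/2} - \gamma^{t(D)}\,\gamma^{-(t(D)-1)/2}$; simplifying the two powers of $\gamma$ via $\gamma^{1-(t(D)-1)/2} = \gamma^{-(t(D)-3)/2}$ and $\gamma^{t(D)-(t(D)-1)/2} = \gamma^{(t(D)+1)/2}$ turns this into exactly~(\ref{eqn:main}). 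Because Ito's estimate is a strict inequality, the chain stays strict where it must, giving $\det(K) > \xi^{t(D)-1.4}$ and closing the argument.

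The routine part is the algebraic rearrangement of the last paragraph, which merely rewrites powers of $\gamma$ and never reverses an inequality, since the coefficient $\gamma^{(t(D)-1)/2}$ multiplying $c(D)-t(D)$ is positive. The genuine content, and the main point to get right, is the volume input: one must cite the bound of \cite{VeEg24} in the precise form $10\, v_{\text{tet}}\,(t(D)-1.4)$ and confirm that it holds exactly in the regime $t(D) > 8$, as it is this strengthened additive shift that replaces the weaker shift of Lackenby's~(\ref{eqn:Lackenby}) and produces the exponent $t(D)-1.4$ propagating through the entire estimate.
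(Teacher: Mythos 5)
Your proposal is correct and follows essentially the same route as the paper's proof: both use Ito's bound~(\ref{eqn:Ito}) together with hypothesis~(\ref{eqn:main}) to obtain $\det(K) > \xi^{t(D)-1.4}$, then compare $2\pi\log\det(K) > 10\,v_{\text{tet}}\,(t(D)-1.4)$ against the volume bound~(\ref{eqn:VE}) of~\cite{VeEg24} valid for $t(D)>8$. Your handling of strictness (noting that Ito's inequality is strict, so the conjecture's strict inequality follows) is in fact slightly more careful than the paper's write-up, which silently replaces $>$ by $\geq$.
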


\begin{proof} By virtue of~\cite[Th.~4.3]{Ito23} there is an inequality~(\ref{eqn:Ito}):  
$$
\det (K) \geq 2 \gamma^{t(D) - 1} + 2 (c(D) - t(D)) \, \gamma^{\frac{t(D) - 3}{2}}.
$$
Using the condition (\ref{eqn:main}) we get
$$
\det (K) \geq \xi^{t(D) - 1.4}, 
$$
where $\xi$ is given by (\ref{eqn:xi}). So,   
	$$
	\log \det (K) \geq (t(D) - 1.4) \frac{5 v_{\rm tet}}{\pi},
	$$
	from where
	\begin{equation}
		2 \pi \log \det (K) \geq 10 \, v_{\rm tet} \, (t(D) -1.4). \label{eqn:100}
	\end{equation} 
	As it was shown in~\cite{VeEg24}, if $t(D) > 8$, then there is an inequality 
	\begin{equation}
		\operatorname{vol} ( K) \leq 10 \, v_{\rm tet} \, (t(D) - 1.4). \label{eqn:VE}
	\end{equation}
	Comparing the inequalities (\ref{eqn:100}) and (\ref{eqn:VE}), we see that the conjecture~\ref{conj1} is true for a link $K$. 
	The theorem is proved. $\Box$ \end{proof}

\begin{remark} {\rm 
Let us illustrate the difference between the estimates (\ref{eqn6}) and (\ref{eqn:main}) for the case $t(D) = 9$. Using the approximations  $\gamma = 1.425299$ and $\xi = 5.029546$, for the first estimate we obtain that the conjecture~\ref{conj1} is true when $c(D) \geq 409\,453$, and for the second estimate when $c(D) \geq 37\,055$.
	}
\end{remark} 

\section{The volume bound via the determinant of link}

Using inequalities (\ref{eqn:Sto}) and (\ref{eqn:Lackenby}) Stoimenow~\cite[Th.~1.1]{Sto07} showed that if $K$ is a non-trivial non-split alternating link, then 
\begin{equation}
	\operatorname{vol}(K) \leq A\cdot \log \det(K) - B, \qquad\text{where} \quad A = \frac{10 \, v_{\text{tet}}}{\log \gamma}, \quad B = \frac{10\, v_{\text{tet}} \log 2}{\log \gamma}. \label{eqn2}
\end{equation} 

The following theorem shows that for links whose diagrams have more than eight twists the bound (\ref{eqn2}) can be improved by using the inequality~(\ref{eqn:VE}) instead of the  inequality (\ref{eqn:Lackenby}).

\begin{theorem} \label{thm2} 
	Let $K$ be a non-trivial non-split alternating link, and its reduced alternating twist-reduced diagram $D$ have $t(D) > 8$ twists. Then the following inequality holds  
\begin{equation}
\operatorname{vol}(K) \leq A\cdot\log \det(K) - C, \label{eqn4}
\end{equation}		
with constants  
\begin{equation} 
A = \frac{10\, v_{\text{\rm tet}}}{\log \gamma} \quad \text{and} \quad C = \frac{10\, v_{\text{\rm tet}} \log 2}{\log \gamma} + 4 v_{\text{\rm tet}}. 
\end{equation} 
\end{theorem}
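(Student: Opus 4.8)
The plan is to combine the volume bound from inequality~(\ref{eqn:VE}) with Stoimenow's determinant bound~(\ref{eqn:Sto}), following the same scheme that produced the original estimate~(\ref{eqn2}) but replacing the weaker Lackenby--Agol--Thurston bound~(\ref{eqn:Lackenby}) with the sharper bound valid for $t(D) > 8$. First I would start from~(\ref{eqn:Sto}), which gives $\det(K) \geq 2\gamma^{t(D)-1}$. Taking logarithms yields
\begin{equation*}
\log \det(K) \geq \log 2 + (t(D) - 1)\log \gamma,
\end{equation*}
and solving for $t(D)$ gives the upper bound
\begin{equation*}
t(D) \leq \frac{\log \det(K) - \log 2}{\log \gamma} + 1.
\end{equation*}

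Next I would substitute this into the improved volume bound~(\ref{eqn:VE}), which is available precisely because $t(D) > 8$. Since $\operatorname{vol}(K) \leq 10\,v_{\text{tet}}\,(t(D) - 1.4)$ and the coefficient $10\,v_{\text{tet}}$ is positive, the upper bound on $t(D)$ transfers directly to an upper bound on the volume:
\begin{equation*}
\operatorname{vol}(K) \leq 10\,v_{\text{tet}}\left( \frac{\log \det(K) - \log 2}{\log \gamma} + 1 - 1.4 \right).
\end{equation*}
Expanding this expression, the coefficient of $\log \det(K)$ is exactly $A = 10\,v_{\text{tet}}/\log \gamma$, the term $-10\,v_{\text{tet}}\log 2/\log \gamma$ appears, and the constant $(1 - 1.4) \cdot 10\,v_{\text{tet}} = -4\,v_{\text{tet}}$ emerges from the $-1.4$ shift. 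Collecting the subtracted terms produces precisely the constant
\begin{equation*}
C = \frac{10\,v_{\text{tet}}\log 2}{\log \gamma} + 4\,v_{\text{tet}},
\end{equation*}
which establishes~(\ref{eqn4}).

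The computation itself is routine algebra; there is no genuine analytic obstacle once the two input inequalities are in hand. The one point requiring care is the direction of the inequalities: I must check that each substitution preserves the $\leq$ direction. The key observation is that~(\ref{eqn:Sto}) bounds $\det(K)$ from below, hence bounds $t(D)$ from above, and since the volume bound~(\ref{eqn:VE}) is increasing in $t(D)$, chaining the two inequalities is valid and yields an upper bound on $\operatorname{vol}(K)$ in terms of $\log\det(K)$. The only structural hypothesis to verify is that the diagram $D$ is reduced, alternating, and twist-reduced with $t(D) > 8$, so that both~(\ref{eqn:Sto}) and~(\ref{eqn:VE}) apply; the non-triviality and non-splitness of $K$ guarantee the diagram has a positive number of twists and that the link is hyperbolic, so that $\operatorname{vol}(K)$ is defined. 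Thus the improvement over~(\ref{eqn2}) comes entirely from the sharper shift $-1.4$ replacing $-1$ in the volume bound, which converts the constant term from $0$ into the additional $4\,v_{\text{tet}}$ saving.
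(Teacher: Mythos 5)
Your proposal is correct and is essentially identical to the paper's own proof: both chain Stoimenow's bound $\det(K) \geq 2\gamma^{t(D)-1}$ with the improved volume bound $\operatorname{vol}(K) \leq 10\,v_{\text{tet}}(t(D)-1.4)$ valid for $t(D)>8$, and the $4v_{\text{tet}}$ saving comes from the shift $-1.4$ exactly as you describe.
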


\begin{proof} 
	By virtue of~\cite[Th.~4.3]{Sto07} there is an inequality (\ref{eqn:Sto}):
	$$
	\det (K) \geq 2 \gamma^{t(D) - 1}. 
	$$
	Therefore,
	$$
	(t(D) - 1) \log\gamma\leq \log\det(K) - \log 2.
	$$
	Using the formula (\ref{eqn:VE}) we get
	$$
	\operatorname{vol} (K) \leq 10 \, v_{\rm tet} \, (t(D) - 1.4) \leq \frac{10 \, v_{\rm tet}}{\log \gamma} \log \det (K) - \frac{10 \, v_{\rm tet} \log 2}{\log \gamma} - 4 \, v_{\rm tet}.
	$$
	The theorem has been proved. $\Box$ \end{proof}

\begin{remark}
By replacing $v_{\text{tet}}$ and $\gamma$ with their approximate values in the inequality (\ref{eqn2}), we get  
\begin{equation}
\operatorname{vol} (K) \leq  28.639760  \cdot \log \det (K) - 15.791802,  	
\end{equation} 
while from the inequality (\ref{eqn4}) we obtain 
\begin{equation}
\operatorname{vol} (K) \leq 28.639760  \cdot \log \det (K) - 19.851568.  
\end{equation} 
\end{remark}	
	
\section{The case of arborescent links}

The proof of the inequality (\ref{eqn:Sto}) in~\cite{Sto07} is based on the correspondence between alternating diagrams and the chessboard coloring graphs. It was observed in~\cite{Sto07} that (\ref{eqn:Sto}) can be improved for the class of alternating arborescent links. For a plane graphs consider the \emph{doubling} of an edge (installing an edge connecting its terminal vertices) and the \emph{bisection} of an edge (putting a vertex of valency two in it). These two operations are presented in Figure~\ref{fig2}, where edges are presented by dotted lines (which are red in the colored version). Other edges may be incident from the left- and rightmost vertices on both sides, and the rest of the graphs on both sides are assumed to be equal.  
For a graph $\mathcal G$ let $\langle \mathcal G \rangle$ be the family of graphs obtained from $\mathcal G$ by repeated edge bisections and doublings. 
\begin{figure}[h]
	\begin{center}
		\scalebox{1.0}{
			\begin{tikzpicture}
			\draw[black, very thick,<-] (-1.5,0.5) -- (-0.5,0.5); 
			\node[above] at (-1,0.5) {\text{bisection}};
			\node[below] at (-1,0.5) {\text{of edge}};
			\pic[
			rotate=90,
			braid/.cd,
			every strand/.style={ultra thick},
			strand 1/.style={black},
			strand 2/.style={black},]
			at (-4.5,0) {braid={s_1^{-1} s_1^{-1}}};
			\filldraw[red] (-4.5,0.5) circle (0.08); 
			\filldraw[red] (-3.25,0.5) circle (0.08); 
			\filldraw[red] (-2,0.5) circle (0.08); 
			\draw [ultra thick, dotted, red] (-4.5,0.5) -- (-3.9,0.5);
			\draw [ultra thick, dotted, red] (-3.6,0.5) -- (-2.9,0.5);
			\draw [ultra thick, dotted, red] (-2.6,0.5) -- (-2,0.5);
			\pic[
			rotate=90,
			braid/.cd,
			every strand/.style={ultra thick},
			strand 1/.style={black},
			strand 2/.style={black},]
			at (0,0) {braid={s_1^{-1} }};
			\draw [ultra thick, dotted, red] (0,0.5) -- (0.6,0.5);
			\draw [ultra thick, dotted, red] (0.9,0.5) -- (1.5,0.5); 
			\filldraw[red] (0,0.5) circle (0.08); 
			\filldraw[red] (1.5,0.5) circle (0.08); 
			\draw[black, very thick,->] (2,0.5)--(3,0.5);
			\node[above] at (2.5,0.5) {\text{doubling}};
			\node[below] at (2.5,0.5) {\text{of edge}};
			\filldraw[red] (3.5,0.5) circle (0.08); 
			\filldraw[red] (5,0.5) circle (0.08); 
			\draw[red, dotted, ultra thick] (4.25,0.5) ellipse (0.75cm and 0.5cm); 
			\pic[
			braid/.cd,
			every strand/.style={ultra thick},
			strand 1/.style={black},
			strand 2/.style={black},]
			at (3.75,1.75) {braid={s_1 s_1}};
			\end{tikzpicture}
		}
	\end{center}
	\caption{Construction of series parallel graphs and related links.} \label{fig2}
\end{figure}
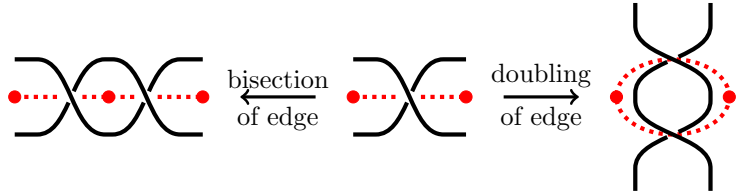

A graph $G$ is \emph{series-parallel} if $G \in \langle T_1 \rangle$, where $T_n$, $n \geq 1$, is a graph consisting of two vertices connected by $n$-multiedge. By the chessboard colorings construction, series-parallel graphs correspond to \emph{alternating} \emph{arborescent} link diagrams, that is, alternating diagrams of links obtained from algebraic tangles~\cite{Sto07}. For algebraic tangles see Conway's paper~\cite{Con}. It was shown by Thistlethwaite~\cite[Th.~4.1]{Thi91} that if a link $K$ admits an alternating diagram which is arborescent, then every alternating diagram of $K$ is arborescent.  A link is said to be arborescent if it admits an arborescent diagram. For definitions and properties of arborescent knots and links see unpublished monograph by Bonahon and Siebenmann~\cite{BS}.  For hyperbolicity of arborescent links see~\cite{FG09} and~\cite{Vo09}. 

Let $t(D)$ be the number of twists in the given alternating twist-reduced arborescent diagram $D$ of a link $K$. If $D$ corresponds to the serial-parallel chessboard coloring graph $G$ differ from $T_2$ and from a join $T_2 * T_2$, then by~\cite[Prop.~6.2]{Sto07}   
\begin{equation}
	\det(K) \geq F_{t(D)+3}, \label{detalg}
\end{equation} 
where $F_k$ denotes the $k$-th Fibonacci numbers defined by  $F_1=F_2=1$ and $F_n=F_{n-1}+F_{n-2}$ for $n>2$.

Using inequalities (\ref{detalg}) and (\ref{eqn:Lackenby}) Stoimenow~\cite[Prop.~6.3]{Sto07} showed that if $K$ is a non-trivial non-split link admitting an alternating arborescent diagram, then 
\begin{equation}
\operatorname{vol}(K) \leq A \cdot \log (\det(K) + B) - C, \label{eqn5}
\end{equation}
where 
\begin{equation}
A = \frac{10v_{\text{\rm tet}}}{\log\frac{1+\sqrt{5}}{2}}, \quad B = \frac{1}{\sqrt{5}} \left(\frac{\sqrt{5}-1}{2}\right)^6, \quad C = 40 v_{\text{\rm tet}} + \frac{10 v_{\text{\rm tet}} \log \frac{1}{\sqrt{5}}}{\log \frac{1+\sqrt{5}}{2}}. \label{eqn55}
\end{equation} 

The following theorem shows that for links whose diagrams have more than eight twists, the bound (\ref{eqn5}) with constants (\ref{eqn55}) can also be improved by using a stronger estimate from the paper~\cite{VeEg24} instead of (\ref{eqn:Lackenby}).

\begin{theorem} \label{thm3}
Let $K$ be a non-trivial non-split link admitting a reduces alternating twist-reduced arborescent diagram with  $t > 8$ twists. Then the following inequality holds 
\begin{equation}
\operatorname{vol}(K) \leq A \cdot \log (\det(K) +D) - E, \label{eqn7}
\end{equation} 
with constants 
\begin{equation} 
A = \frac{10v_{\text{\rm tet}}}{\log\frac{1+\sqrt{5}}{2}}, \quad D = \frac{1}{\sqrt{5}} \left(\frac{\sqrt{5}-1}{2}\right)^{12}, \quad E = 44 v_{\text{\rm tet}} + \frac{10 v_{\text{\rm tet}} \log \frac{1}{\sqrt{5}}}{\log \frac{1+\sqrt{5}}{2}}. \label{eqn8}
\end{equation} 
\end{theorem}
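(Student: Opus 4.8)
The plan is to mirror the proof of Theorem~\ref{thm2}, but with two substitutions: in place of Stoimenow's bound~(\ref{eqn:Sto}) I would use the Fibonacci bound~(\ref{detalg}), and in place of Lackenby's volume estimate~(\ref{eqn:Lackenby}) I would use the sharper estimate~(\ref{eqn:VE}) from~\cite{VeEg24}, which is available precisely because $t = t(D) > 8$. Since the diagram is arborescent with $t > 8$ twists, its series-parallel graph is neither $T_2$ nor $T_2 * T_2$, so~(\ref{detalg}) applies and gives $\det(K) \geq F_{t+3}$. Throughout I write $\varphi = \frac{1+\sqrt{5}}{2}$ and $\psi = \frac{1-\sqrt{5}}{2} = -\varphi^{-1}$, so that Binet's formula reads $F_{t+3} = \frac{1}{\sqrt{5}}\bigl(\varphi^{t+3} - \psi^{t+3}\bigr)$.

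The first step is to turn the Fibonacci lower bound into a clean exponential lower bound for $\det(K) + D$. Rewriting Binet's formula and using $\psi^{t+3} = (-1)^{t+3}\varphi^{-(t+3)}$ gives
$$
\frac{1}{\sqrt{5}}\,\varphi^{t+3} = F_{t+3} + \frac{(-1)^{t+3}}{\sqrt{5}}\,\varphi^{-(t+3)}.
$$
I would then check that the error term on the right is at most $D = \frac{1}{\sqrt{5}}\varphi^{-12}$ for every $t \geq 9$: when $t+3$ is odd it is negative, and when $t+3$ is even it equals $\frac{1}{\sqrt{5}}\varphi^{-(t+3)}$, which does not exceed $\frac{1}{\sqrt{5}}\varphi^{-12} = D$ exactly because $t + 3 \geq 12$. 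Combining this with $F_{t+3} \leq \det(K)$ yields
$$
\det(K) + D \ \geq\ \frac{1}{\sqrt{5}}\,\varphi^{t+3}.
$$

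The second step is to take logarithms and isolate $t$. The inequality above gives $\log\bigl(\det(K)+D\bigr) \geq (t+3)\log\varphi + \log\frac{1}{\sqrt{5}}$, whence
$$
t \ \leq\ \frac{\log\bigl(\det(K)+D\bigr) - \log\frac{1}{\sqrt{5}}}{\log\varphi} - 3.
$$
Substituting this into~(\ref{eqn:VE}), that is $\operatorname{vol}(K) \leq 10\,v_{\text{tet}}\,(t - 1.4)$, and collecting constants, I expect to arrive at~(\ref{eqn7}) with $A = \frac{10\,v_{\text{tet}}}{\log\varphi}$ and $E = 44\,v_{\text{tet}} + \frac{10\,v_{\text{tet}}\log\frac{1}{\sqrt{5}}}{\log\varphi}$. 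Here the constant $44\,v_{\text{tet}}$ appears as $10\,v_{\text{tet}}\cdot(3 + 1.4)$, the summand $14\,v_{\text{tet}}$ replacing Stoimenow's $10\,v_{\text{tet}}$ because the shift $1.4$ of~(\ref{eqn:VE}) replaces the shift $1$ of~(\ref{eqn:Lackenby}).

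The only delicate point I anticipate is the calibration of the constant $D$. The exponent $12$ is forced: it is the smallest value for which $\frac{1}{\sqrt{5}}\varphi^{-(t+3)} \leq D$ holds throughout the even-parity case under the hypothesis $t > 8$ (equivalently $t + 3 \geq 12$). This is the exact analogue of Stoimenow's choice $B = \frac{1}{\sqrt{5}}\varphi^{-6}$, which was calibrated for his range $t \geq 3$; once $D$ is fixed, the remainder is a routine rearrangement identical in form to the proof of Theorem~\ref{thm2}.
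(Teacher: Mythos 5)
Your proposal is correct and follows essentially the same route as the paper: both combine the Fibonacci bound~(\ref{detalg}) with Binet's formula, absorb the $\pm\frac{1}{\sqrt{5}}\varphi^{-(t+3)}$ error term into $D$ using $t+3\geq 12$, and then apply~(\ref{eqn:VE}) in place of~(\ref{eqn:Lackenby}); the paper merely substitutes the volume bound into the exponent before taking logarithms, whereas you isolate $t$ after taking logarithms, which is an equivalent rearrangement. One trivial remark: in your calibration comment, $12$ is the \emph{largest} exponent (giving the smallest admissible $D$) for which the error estimate holds over $t+3\geq 12$, not the smallest.
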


\begin{proof} 
By virtue of~\cite[Prop.~6.2]{Sto07} we have 
$$
\det(K) \geq  F_{t(D)+3}, 
$$
where $F_{t(D) + 3}$ is the $(t(D)+3)$-th Fibonacci number. Recall that the famous Binet's formula gives the $n$-th Fibonacci number in terms of the golden ratio $\phi = \frac{1}{2} (1 + \sqrt{5})$:   
$$
F_n = \frac{\phi^n - (-\phi)^n}{\sqrt{5}} =  \frac{(1 + \sqrt{5})^n - (1-\sqrt{5})^n}{2^n \sqrt{5}}.  
$$
Therefore,  
$$
\det(K) \geq \frac{1}{\sqrt{5}} \left[ \left( \frac{1+\sqrt{5}}{2} \right)^{t(D)+3} - \left( \frac{1-\sqrt{5}}{2}\right)^{t(D)+3} \right].
$$
Since $t(D)>8$, using the formula (\ref{eqn:VE}) we get
$$
\det(K) \geq \frac{1}{\sqrt{5}} \left( \frac{1+\sqrt{5}}{2} \right)^{\frac{\operatorname{vol} K}{10 v_{\text{\rm tet}}} + 4.4} - \frac{1}{\sqrt{5}} \left( \frac{\sqrt{5}-1}{2} \right)^{12}.
$$
Taking the logarithm, we obtain the required inequality. The theorem has been proved. $\Box$ 
\end{proof}
	
\begin{remark} {\rm 
Let us demonstrate numerically the difference between inequalities~(\ref{eqn5}) and~(\ref{eqn7}).  
By replacing the constants with their approximate values, (\ref{eqn5}) gives 
\begin{equation}
	\operatorname{vol} (K) \leq  21.091368  \cdot \log \left( \det (K) + 0.024922 \right) - 23.625039, 
\end{equation}  
while~(\ref{eqn7}) gives   
	\begin{equation}
		\operatorname{vol} (K) \leq 21.091368 \cdot \log (\det (K) + 0.001388) - 27.684806. 
	\end{equation} 	
Finally, we note that in the paper all approximate calculations have been made  to an accuracy of $32$ decimal places and then only $6$ are presented in the final formulae. 
}
\end{remark}
	
\begin{remark} {\rm 
As Stoimenow noticed~\cite[Remark 6.4]{Sto07}, there can be some confusion as to the meaning of alternating arborescent link. There are links which are alternating and arborescent, but whose alternating diagrams are not arborescent. The Borromean rings are the example. If such links are included in the consideration, than the estimate~(\ref{detalg}) should be replaced by the estimate $$\det(K) \geq \frac{8}{17} F_{t(D)+3},$$ 
and coefficients $D$ and $E$ from~(\ref{eqn8}) should be replaced by 
\begin{equation} 
D = \frac{8}{17\sqrt{5}} \left(\frac{\sqrt{5}-1}{2}\right)^{12} \quad \text{and} \qquad E = 44 v_{\text{\rm tet}} + \frac{10 v_{\text{\rm tet}} \log \frac{8}{17\sqrt{5}}}{\log \frac{1+\sqrt{5}}{2}}. 
\end{equation} 
	}
\end{remark}	

\smallskip

\textbf{Acknowledgment.} 
The authors thank Professor Tetsuya Ito for the information on the inequality (\ref{eqn:Ito}) obtained in~\cite[Th.~4.3]{Ito23}.


\end{document}